%
%
%
%
\NeedsTeXFormat{LaTeX2e}

 \documentclass{alteredlms}
\usepackage{geometry}   
\geometry{letterpaper}                   
\usepackage{graphicx}
\usepackage{amssymb}
\usepackage{epstopdf}
\DeclareGraphicsRule{.tif}{png}{.png}{`convert #1 `dirname #1`/`basename #1 .tif`.png}

 \newtheorem{theorem}{Theorem}[section]
    \newtheorem{corollary}[theorem]{Corollary}
   \newtheorem{lemma}[theorem]{Lemma}
    \newtheorem{proposition}[theorem]{Proposition}  
    
     \newnumbered{conjecture}[theorem]{Conjecture}  
 \newnumbered{question}[theorem]{Question} 
\newnumbered{definition}[theorem]{Definition}
\newnumbered{remark}[theorem]{Remark}
\newnumbered{example}[theorem]{Example}
\newnumbered{examples}[theorem]{Examples}
\newnumbered{keyexample}[theorem]{Key example}

  \newcommand{\Z}{\ensuremath{{\mathbb{Z}}}}

  \newcommand{\G}{\Gamma}


\newcommand{\AG}{A_\G}            
\newcommand{\p}{^\perp}

\newcommand{\Jv}{J_{[v]}}
\newcommand{\Jw}{J_{[w]}}

\newcommand{\Lv}{L_{[v]}}
\newcommand{\Lw}{L_{[w]}}
\newcommand{\Lu}{L_{[u]}}
\newcommand{\Jy}{J_{[y]}}
\newcommand{\Ly}{L_{[y]}}
\newcommand{\Jz}{J_{[z]}}


  


\title[Automorphism groups of RAAGs]{Finiteness properties of automorphism groups of right-angled Artin groups}
\author{Ruth Charney and Karen Vogtmann}
\classno{20F36 (primary)}

\extraline{R. Charney was partially supported by NSF grant DMS 0705396.  K. Vogtmann was partially supported by NSF grant DMS 0705960}


\begin{document}
\maketitle

\begin{abstract} We study the algebraic structure of the outer automorphism group of a general right-angled Artin group.  We show that this group is virtually torsion-free and has finite virtual cohomological dimension.  This generalizes results proved in \cite{ChCrVo} for two-dimensional right-angled Artin groups.  
\end{abstract}

\section{Introduction}

Associated to a finite simplicial graph $\G$ is a right-angled Artin group $\AG$ whose generators are the vertices of $\G$ and whose relations are commutators between adjacent vertices. These groups range from free groups (when $\G$ has no edges) to free abelian groups (when $\G$ is a complete graph), hence their outer automorphism groups interpolate between $Out(F_n)$ and $GL_n(\Z)$.  The automorphism groups of free groups and free abelian groups have many properties in common. In particular, they are virtually torsion-free, have finite virtual cohomological dimension, and satisfy a Tits alternative (every subgroup is either virtually solvable or contains a free group of rank 2).  One is naturally led to ask whether the same is true for automorphism groups of right-angled Artin groups.

In \cite{ChCrVo} we began a study of the groups $Out(\AG)$  by analyzing the case when the defining graph $\G$ is connected and has no triangles.  We used both algebraic and geometric methods to establish cohomological finiteness results as well as to prove that the Tits alternative holds.  The key algebraic  tools we used were certain projection homomorphisms.  To define these homomorphisms, we introduced a partial ordering and an  equivalence relation on the vertices of $\G$.  For each maximal vertex $v$, the projection homomorphism $P_v$ was defined on a finite index subgroup of $Out(\AG)$ and took its image in $Out(A_{lk(v)})$ where $lk(v)$ denotes the link of $v$ in $\G$.  These projection homomorphisms were assembled into a single homomorphism $P$, and the kernel of $P$ was computed precisely.    

In this paper we show how similar projection homomorphisms can be defined for general right-angled Artin groups.  We then determine enough information about the kernel of the composite homomorphism to be able to construct an inductive proof that  $Out(\AG)$ is virtually torsion-free.  Thus the virtual cohomological dimension of $Out(\AG)$ is defined,   and we go on to  prove that it is finite for any finite simplicial graph $\G$.

\section{Join subgroups}\label{joins}
Let $\G$ be a connected, simplicial graph with vertex set $V$, and $\AG$ the associated  right-angled Artin group.  For two vertices $v,w \in V$, let $d(v,w)$ denote the distance from $v$ to $w$ in $\G$.  The  link $lk(v)$ is the subgraph spanned by vertices at distance one from $v$, and the  star $st(v)$ is the subgraph spanned by vertices at distance at most one from $v$.  Note that $v\in lk(w)$ if and only if $w\in lk(v)$, and $v\in st(w)$ if and only if $w\in st(v)$.

We first need to define an appropriate partial ordering  and equivalence relation on vertices in $\G$, and prove some of their basic properties.  
For $v,w\in V$, set $\,v \leq w\,$ if $lk(v) \subseteq st(w)$. Note that this may occur in three ways:
\begin{enumerate}
\item $v=w$,
\item $d(v,w)=1$ and $st(v) \subseteq st(w)$,
\item $d(v,w)=2$ and $lk(v) \subseteq lk(w)$.
\end{enumerate}

\begin{lemma}\label{one} If $u\leq v\leq w$ and $d(u,v)=1$, then $d(v,w)\leq 1$.
\end{lemma}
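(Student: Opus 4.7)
The plan is to argue by contradiction: assume that $d(v,w) \geq 2$ and derive a contradiction with the inclusions coming from $u \leq v$ and $v \leq w$. The whole argument is really a case analysis of the three ways in which $x \leq y$ can occur, driven by the values $d(u,v)$ and $d(v,w)$.

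First I would unpack $v \leq w$. Under the assumption $d(v,w) \geq 2$, the case $v=w$ and the case $d(v,w)=1$ are both ruled out, so we must be in case (3) of the definition: $d(v,w)=2$ and $lk(v) \subseteq lk(w)$. Next I would unpack $u \leq v$ in light of $d(u,v)=1$: cases (1) and (3) are excluded, so we are in case (2) and hence $st(u) \subseteq st(v)$.

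Now I would chain these inclusions against the hypothesis $d(u,v)=1$. Since $u \in lk(v)$ and $lk(v) \subseteq lk(w)$, we get $u \in lk(w)$, i.e.\ $w \in lk(u) \subseteq st(u)$. Applying $st(u) \subseteq st(v)$, this gives $w \in st(v)$, so $d(v,w) \leq 1$, contradicting the standing assumption.

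There is no real obstacle here beyond making sure the three cases in the definition of $\leq$ are mutually exclusive as labelled by $d(\cdot,\cdot)$, so that the two hypotheses $d(u,v)=1$ and $d(v,w)\geq 2$ each force a unique case to hold. Once that is in place the inclusions compose in one line to produce the contradiction.
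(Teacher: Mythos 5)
Your proof is correct and is essentially the paper's argument in contrapositive form: both proofs chain $u \in lk(v) \subseteq st(w)$ (or $lk(w)$) to get $w \in st(u)$, and then use $u \leq v$ to pull $w$ into $st(v)$. The contradiction framing and the detour through the explicit three-case description of $\leq$ (rather than the raw definition $lk(x) \subseteq st(y)$) add a little overhead --- in particular you implicitly rely on the three cases being exhaustive --- but the substance is the same.
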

\begin{proof} Since $d(u,v)=1$, $u\in lk(v)\subseteq st(w)$.  Therefore $w\in st(u)$, i.e. either $w=u$ or $w\in lk(u)\subseteq st(v)$.  In either case $d(v,w)\leq 1$.
\end{proof}

\begin{lemma}\label{transitive}   The relation $\leq$ is  transitive on the vertices of $\G$.
\end{lemma}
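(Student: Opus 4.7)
The plan is to prove transitivity by case analysis on $d(u,v)$, using Lemma \ref{one} to handle the case where $u$ and $v$ are adjacent. Throughout I assume $u \leq v \leq w$ with $u \neq v$ and $v \neq w$ (otherwise transitivity is immediate), so both inequalities fall into cases (2) or (3) of the definition. The goal is to show $lk(u) \subseteq st(w)$.

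First I would handle the case $d(u,v) = 1$. Here $u \leq v$ means $st(u) \subseteq st(v)$. Applying Lemma \ref{one} gives $d(v,w) \leq 1$, and since $v \neq w$ we must have $d(v,w) = 1$, so $v \leq w$ yields $st(v) \subseteq st(w)$. Chaining these inclusions gives $st(u) \subseteq st(w)$, and since $lk(u) \subseteq st(u)$ we obtain $lk(u) \subseteq st(w)$, i.e.\ $u \leq w$.

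Next I would handle the case $d(u,v) = 2$, where $u \leq v$ means $lk(u) \subseteq lk(v)$. There are two subcases according to $d(v,w)$. If $d(v,w) = 1$, then $st(v) \subseteq st(w)$, so $lk(u) \subseteq lk(v) \subseteq st(v) \subseteq st(w)$. If $d(v,w) = 2$, then $lk(v) \subseteq lk(w)$, so $lk(u) \subseteq lk(v) \subseteq lk(w) \subseteq st(w)$. In either subcase $u \leq w$.

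I don't foresee a serious obstacle: the only nonobvious step is the case $d(u,v)=1$, which is exactly what Lemma \ref{one} was set up to control (without it, one would have to worry about $u \leq v \leq w$ producing $d(u,w)=3$ and losing the inclusion). The other subcases reduce to a short chain of inclusions among links and stars.
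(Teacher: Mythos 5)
Your proof is correct and follows essentially the same route as the paper: a case split on $d(u,v)$, with Lemma~\ref{one} supplying the key fact $d(v,w)\leq 1$ in the adjacent case. The paper merely compresses your second case by noting that $lk(u)\subseteq lk(v)\subseteq st(w)$ holds directly from $v\leq w$ without subdividing on $d(v,w)$, but the content is identical.
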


\begin{proof} Suppose $u\leq v\leq w$.  We need to check that $u\leq w$.
If $d(u,v)=0$ or $2$, then $lk(u)\subseteq lk(v)\subseteq st(w)$, so this is immediate.
If $d(u,v)=1$, then by Lemma~\ref{one}, $d(v,w)\leq 1$.  Since we already know that $lk(v)\subset st(w)$, this gives $st(v)\subseteq st(w)$, so $lk(u)\subseteq st(v)\subseteq st(w)$.
\end{proof}

Now define a relation on vertices by $v \sim w$ if $v \leq w$ and $w \leq v$, or equivalently, if one of the following holds, 
\begin{enumerate}
\item $v=w$,
\item $d(v,w)=1$  and $st(v) = st(w)$,
\item $d(v,w)=2$ and $lk(v)= lk(w)$.
\end{enumerate}
It follows from Lemma~\ref{transitive} that this is an equivalence relation and that $\leq$ induces a partial ordering on the set of equivalence classes.  
We denote the equivalence class of $v$ by $[v]$.

\begin{lemma} All elements of $[v]$ are the same distance from each other.
\end{lemma}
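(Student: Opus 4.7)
The plan is as follows. The equivalence relation on vertices identifies three mutually exclusive types of pairs $(x,y)$ with $x\sim y$: either $x=y$, or $d(x,y)=1$ (with $st(x)=st(y)$), or $d(x,y)=2$ (with $lk(x)=lk(y)$). It therefore suffices to show that if $[v]$ contains two distinct vertices, we cannot simultaneously have a pair at distance $1$ and a pair at distance $2$. If $|[v]|\le 2$ there is nothing to check, so the only nontrivial case is $|[v]|\ge 3$.

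I would prove the lemma by contradiction. Assume $u,v,w$ are three distinct vertices in $[v]$ with $d(u,v)=1$ and $d(v,w)=2$ (the remaining combinations reduce to this one by renaming). From the definition of $\sim$, the first pair satisfies $st(u)=st(v)$ and the second satisfies $lk(v)=lk(w)$. The key observation is that $u\in lk(v)$ because $d(u,v)=1$, and since $lk(v)=lk(w)$ this gives $u\in lk(w)$, so $d(u,w)=1$. But then $u\sim w$ at distance $1$ forces $st(u)=st(w)$. Now $v\in lk(u)\subseteq st(u)=st(w)$, which means $d(v,w)\le 1$, contradicting $d(v,w)=2$.

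The only modest subtlety is being careful about which case of $\sim$ applies to each pair; once the types are pinned down, the star/link equalities can be chained mechanically to produce the contradiction. I don't anticipate any real obstacle, since the argument rests on the basic fact that $x\in st(y)\iff y\in st(x)$ together with the equalities of stars and links guaranteed by the two cases of $\sim$ for distinct vertices. No further machinery beyond Lemma \ref{transitive} (which we used implicitly to know $u\sim w$) is required.
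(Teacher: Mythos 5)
Your proof is correct and essentially matches the paper's: the paper observes that if $d(x,y)=1$ for $x,y\in[v]$, then any other $z\in[v]$ satisfies $y\le x\le z$, so Lemma~\ref{one} gives $d(x,z)=1$, and your chain $u\in lk(v)=lk(w)\Rightarrow st(u)=st(w)\Rightarrow v\in st(u)=st(w)$ is just the proof of that lemma inlined and run as a contradiction. The only loose end is the parenthetical claim that every violating configuration "reduces by renaming" to a triple sharing a vertex; this does require the (easy) one-line check that two disjoint pairs at distances $1$ and $2$ always produce such a triple.
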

\begin{proof}
If $x,y\in [v]$ and $d(x,y)$=1, then any other $z\in[v]$ satisfies $y\leq x\leq z$, so by Lemma~\ref{one}, $d(x,z)=1$.  
\end{proof}

Thus $[v]$ generates either a free abelian subgroup of $\AG$ or a non-abelian free subgroup.  Let $V_{ab}$ denote the set of vertices $v$ with $A_{[v]}$ abelian, and $V_{fr}$ the set of vertices with $A_{[v]}$ non-abelian.  

\begin{lemma}\label{chain}  Suppose $[v_1] < [v_2] < \dots < [v_k]$.  Then there exists $j$, $0 \leq j \leq k$, such that 
$d(v_{i}, v_{i+1})=2$ for all $i <j$ and $d(v_{i}, v_{i+1})=1$  for all $i \geq j$.  Moreover, for $i > j$, 
$v_i \in V_{ab}$.
\end{lemma}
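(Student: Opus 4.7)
My plan is to let $j$ be the smallest index $i\in\{1,\dots,k-1\}$ with $d(v_i,v_{i+1})=1$, and set $j=k$ if no such index exists (if $k=1$ the entire statement is vacuous and we take $j=1$). With this choice, both conclusions will follow from iterated applications of Lemma~\ref{one}.

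First I would verify the distance claim. For each $i$, the strict inequality $[v_i]<[v_{i+1}]$ forces $v_i\neq v_{i+1}$, hence $d(v_i,v_{i+1})\in\{1,2\}$. By minimality of $j$ we have $d(v_i,v_{i+1})\neq 1$ for $i<j$, so all such distances equal $2$. For $i\geq j$ I would induct on $i$: the base $i=j$ is the definition of $j$; in the inductive step, if $d(v_i,v_{i+1})=1$, then applying Lemma~\ref{one} to the chain $v_i\leq v_{i+1}\leq v_{i+2}$ yields $d(v_{i+1},v_{i+2})\leq 1$, and combining with $v_{i+1}\neq v_{i+2}$ forces equality to $1$.

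For the $V_{ab}$ assertion I would fix $i>j$ and pick any $w\in[v_i]$ with $w\neq v_i$. Then $i-1\geq j$, so $d(v_{i-1},v_i)=1$ by the distance claim, and $v_i\leq w$ because $v_i\sim w$. Thus Lemma~\ref{one} applies to the chain $v_{i-1}\leq v_i\leq w$ and delivers $d(v_i,w)\leq 1$; since $w\neq v_i$ this forces $d(v_i,w)=1$. Hence every pair of distinct elements of $[v_i]$ is adjacent in $\G$, so by the preceding lemma $[v_i]$ spans a clique and $A_{[v_i]}$ is free abelian, i.e.\ $v_i\in V_{ab}$.

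I do not anticipate any genuinely difficult step: the argument is essentially a bookkeeping exercise on top of Lemma~\ref{one}. The one delicate point is the endpoint of the chain, and in particular the clause ``$i>j$'' in the last assertion; this strictness is exactly what guarantees the existence of the preceding vertex $v_{i-1}$ required to apply Lemma~\ref{one} and thereby pin down the geometry of $[v_i]$.
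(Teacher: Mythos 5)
Your proof is correct and follows exactly the route the paper intends: the paper's own proof is the single line ``This is immediate from Lemma~\ref{one},'' and your argument is precisely the careful unwinding of that remark (iterating Lemma~\ref{one} along the chain for the distance claim, and applying it to $v_{i-1}\leq v_i\leq w$ with $w\in[v_i]$ for the abelian claim). No gaps; you have simply supplied the bookkeeping the authors left to the reader.
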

\begin{proof}
This is immediate from Lemma~\ref{one}.
\end{proof}

It is not necessarily the case that $v_i$ is non-abelian for $i \leq j$, since an abelian $[v]$ can be less than a non-abelian $[w]$ if $[v]$ consists of just a single vertex.

\medskip
The projection homomorphisms in \cite{ChCrVo} mapped first to groups associated to certain {\it maximal join subgraphs} containing a given maximal vertex, then to groups associated to the link of  the vertex.  In the general case, the definitions of these subgraphs must be modified as follows.

For two simplicial graphs $\Theta_1$ and $\Theta_2$, let $\Theta_1 \ast \Theta_2$ denote their  join, that is, the graph formed by joining every vertex of $\Theta_1$ to every vertex of $\Theta_2$ by an edge.  Note that if $\Theta=\Theta_1 \ast \Theta_2$, then $A_{\Theta}= A_{\Theta_1} \times A_{\Theta_2}$.

For a vertex $v$ in $\G$, define
\begin{eqnarray*}
L_{[v]} = lk(v) \backslash [v] \\
J_{[v]} = L_{[v]} \ast [v]
\end{eqnarray*}
$\Jv$ is called the \emph{join associated to $[v]$.}  Note that it is always the case that $st(v) \subseteq \Jv$ and equality holds if and only if $v \in V_{ab}$.  Note also, that the relation $w \in \Lv$ is symmetric; that is, $w$ lies in $\Lv$ if and only if $v$ lies in $\Lw$.

The special subgroup generated by $\Jv$ is a direct product
\[
A_{\Jv} = A_{\Lv} \times A_{[v]} 
\] 
where the second factor is either free or free abelian.  

In order to understand the relation between various equivalence classes of vertices and their associated joins, it is useful to consider a graph $\G_0$ defined as follows.  The vertices of $\G_0$ are the equivalence classes $[v]$ which are maximal with respect to the partial order $\leq$.  Two vertices $[v]$ and $[w]$ are joined by an edge in $\G_0$ if and only if $v$ and $w$ are adjacent in $\G$.  It follows from the definition of the equivalence relation that adjacency is independent of choice of representative.  We remark that a similar graph, based on this same equivalence relation, is described in recent work of Duncan, Kazachkov, and Remeslennikov \cite{DuKaRe}.

\begin{lemma} \label{G_0}
  $\G_0$ is a connected graph and every $w \in V$ lies in the join $\Jv$ associated to some $[v]$ in $\G_0$.  
\end{lemma}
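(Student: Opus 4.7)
The plan is to prove the two assertions of the lemma separately: first that every vertex of $\G$ lies in $\Jv$ for some maximal $[v]$, and then, using this as a tool, that $\G_0$ is connected by lifting $\G$-paths to $\G_0$-walks.

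For the first assertion, the natural attempt—choose maximal $[v]$ with $[w]\leq[v]$—fails precisely when $d(w,v)=2$, so I would instead pass to a neighbor of $w$ before lifting. If $|V|=1$ the statement is trivial, so assume $|V|\geq 2$; since $\G$ is connected, $w$ has a neighbor $u$, and finiteness of the vertex set furnishes a maximal $[v]$ with $[u]\leq[v]$. Then $u\leq v$ unpacks to $lk(u)\subseteq st(v)$, and $w\in lk(u)$ gives $w\in st(v)$. Therefore $w=v$ or $w\in lk(v)$, and in either case $w\in[v]\cup\Lv\subseteq\Jv$, as required.

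For connectedness of $\G_0$, take maximal classes $[v]$ and $[v']$ and a path $v=x_0,x_1,\ldots,x_n=v'$ in $\G$. For each $i$ the first assertion yields a maximal $[y_i]$ with $x_i\in J_{[y_i]}$, and we can arrange $[y_0]=[v]$, $[y_n]=[v']$. It suffices to show that for every $i$ the classes $[y_i]$ and $[y_{i+1}]$ are either equal or adjacent in $\G_0$, since then $[y_0],[y_1],\ldots,[y_n]$ is the required walk.

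The main obstacle is this consecutive-pair claim, which I expect to be a short but slightly tedious case analysis on $d(x_i,y_i)\in\{0,1,2\}$ and $d(x_{i+1},y_{i+1})\in\{0,1,2\}$. In every case one unpacks $x_i\leq y_i$—as $st(x_i)\subseteq st(y_i)$ when $d(x_i,y_i)=1$, or $lk(x_i)\subseteq lk(y_i)$ when $d(x_i,y_i)=2$—to push $x_{i+1}$ into $st(y_i)$ or $lk(y_i)$, and symmetrically from the $y_{i+1}$ side. Whenever the argument produces an inequality $y_i\leq y_{i+1}$ or its reverse, maximality of both classes collapses them to a single vertex of $\G_0$; in the remaining cases one reads off direct adjacency of $y_i$ and $y_{i+1}$ in $\G$. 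Lemma~\ref{one} is useful throughout to rule out incompatible distance combinations, and once this case check is complete the lifted walks concatenate to give connectedness of $\G_0$.
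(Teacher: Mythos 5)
Your first assertion is proved correctly and by exactly the paper's argument: pass to a neighbor $u$ of $w$, take a maximal $[v]\geq [u]$, and conclude $w\in lk(u)\subseteq st(v)\subseteq \Jv$. The gap is in the connectivity half. The consecutive-pair claim you defer to a ``tedious case analysis'' --- that maximal classes $[y_i]$, $[y_{i+1}]$ chosen only so that $x_i\in J_{[y_i]}$ and $x_{i+1}\in J_{[y_{i+1}]}$ must be equal or adjacent in $\G_0$ when $x_i$ and $x_{i+1}$ are adjacent in $\G$ --- is not just unproven; it is false for the choices you allow. Let $\G$ be a path with vertices $a,b,c,d,e$ in order. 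Then $[b]=\{b\}$, $[c]=\{c\}$, $[d]=\{d\}$ are all maximal, $J_{[b]}=\{a,b,c\}$ and $J_{[d]}=\{c,d,e\}$. For the edge from $b$ to $c$ your construction legitimately permits $[y_0]=[b]$ and $[y_1]=[d]$ (take the neighbor $u=d$ of $c$), yet $[b]$ and $[d]$ are neither equal nor adjacent in $\G_0$. Note also that your sketch of the case analysis quietly assumes $x_i\leq y_i$, but the construction from your first assertion only delivers $x_i\in st(y_i)$ (or, worse, just $x_i\in J_{[y_i]}$); these conditions say nothing about $lk(x_i)$, which is what you need to push $x_{i+1}$ toward $y_i$.

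The argument is repaired by choosing $[y_i]$ maximal with $[x_i]\leq [y_i]$ for each interior vertex (such a class exists by finiteness and Lemma~\ref{transitive}). Then $x_{i+1}\in lk(x_i)\subseteq st(y_i)$, so either $x_{i+1}=y_i$, in which case $[y_i]=[x_{i+1}]\leq[y_{i+1}]$ and maximality forces $[y_i]=[y_{i+1}]$, or $y_i\in lk(x_{i+1})\subseteq st(y_{i+1})$, in which case $[y_i]$ and $[y_{i+1}]$ are equal or adjacent in $\G_0$. No further case analysis is needed. This is essentially what the paper does, packaged instead as an induction on path length: it replaces the second vertex $v_1$ of the path by a maximal $w$ with $[v_1]\leq[w]$, observes that $w$ is equal or adjacent to both $v_0$ and $v_2$ because $v_0,v_2\in lk(v_1)\subseteq st(w)$, and recurses on the shorter path $w,v_2,\dots,v_n$.
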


\begin{proof}   To prove connectivity of $\G_0$, let $[v]$ and $[v']$ be vertices of $\G_0$.  Let $v=v_0, v_1, \dots ,v_n=v'$ be an edgepath in $\G$.  We proceed by induction on $n$.  If $n=1$, then $[v],[v']$ are adjacent in $\G_0$.  Suppose $n >1$.  Choose $[w]$ such that $[w]$ is maximal and $[v_1] \leq [w]$.  Then $v$ and $v_2$ lie in $lk(v_1) \subset st(w)$.  So either $[v]=[w]$ or $[v]$ is adjacent to $[w]$ in $\G_0$, and there is an edgepath $w, v_2, \dots ,v_n$ in $\G$.  By induction, $[w]$ is connected to $[v']$ by a path in $\G_0$.

For the second statement of the lemma, note that if $w$ is in the link of $u$, then for any $v$ with $[u] \leq [v]$, $w \in lk(u) \subset st(v) \subset \Jv$.  
\end{proof}

For a subgraph $\Theta$ of $\G$, denote the normalizer, centralizer  and center of $A_\Theta$  respectively  by $N(\Theta)$, $C(\Theta)$ and $Z(\Theta)$.  It follows from a theorem of E.~Godelle 
\cite{God03} (see also \cite{ChCrVo},  Proposition 2.2), that 
\[
N(\Theta) =A_{\Theta\cup \Theta\p} \quad C(\Theta)=A_{\Theta\p}
\quad Z(\Theta) = A_{\Theta\cap \Theta\p}. 
\]
where $\Theta\p$ is the set of vertices commuting with all elements of $\Theta$.  We will be particularly interested in the case of $\Theta=\Jv$.

\begin{lemma}\label{centers}
  If $[v]$ is maximal, then the centralizer, center, and normalizer of $A_{\Jv}$ are given by
\[
C(\Jv) = Z(\Jv)= A_{[v]}  \textrm{ if $v \in V_{ab}$ and 
					\{1\}  if $v \in V_{fr}$}
\]
\[
N(\Jv) = A_{\Jv}.
\]
\end{lemma}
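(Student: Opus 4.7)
The plan is to reduce the three equalities to the single computation of the perpendicular $\Jv\p$. Granting this, the formulas of Godelle quoted just above the lemma immediately give $C(\Jv) = A_{\Jv\p}$; since $\Jv\p$ will turn out to be either $[v]$ or $\emptyset$, in both cases contained in $\Jv$, the center $Z(\Jv) = A_{\Jv \cap \Jv\p}$ coincides with $C(\Jv)$, and the normalizer $N(\Jv) = A_{\Jv \cup \Jv\p}$ equals $A_{\Jv}$. So it suffices to prove $\Jv\p = [v]$ when $v \in V_{ab}$ and $\Jv\p = \emptyset$ when $v \in V_{fr}$.

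First I analyze which vertices of $[v]$ lie in $\Jv\p$. By inspection of the three cases defining the equivalence relation, every $u \in [v]$ satisfies $\Lv \subseteq lk(u)$: in case (1) this is trivial, in case (2) it uses $st(u)=st(v)$ together with $u \notin \Lv$, and in case (3) it uses $lk(u)=lk(v)$. Hence $u$ commutes with every vertex of $\Lv$, so $u \in \Jv\p$ is equivalent to $u$ commuting with every other vertex of $[v]$, which holds precisely when $[v]$ generates an abelian subgroup, i.e.\ when $v \in V_{ab}$. This gives $[v] \cap \Jv\p = [v]$ if $v \in V_{ab}$ and $[v] \cap \Jv\p = \emptyset$ if $v \in V_{fr}$.

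The main step, where maximality of $[v]$ finally enters, is to show that no vertex $u$ outside $[v]$ lies in $\Jv\p$. Suppose for contradiction that $u \in \Jv\p$ with $u \notin [v]$. Since $u$ commutes with $v \in \Jv$ and $u \neq v$, we have $u \in lk(v)$, and then $u \notin [v]$ forces $u \in \Lv$. I claim $v \leq u$, i.e.\ $lk(v) \subseteq st(u)$: any $w \in lk(v)$ is either in $[v]$, in which case $w \neq u$ (as $u \notin [v]$) and commutation of $u$ with $w \in \Jv$ gives the edge $uw$; or $w \in \Lv \subseteq \Jv$, in which case commutation of $u$ with $w$ places $w$ in $st(u)$. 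Thus $v \leq u$, and $u \notin [v]$ yields $[v] < [u]$, contradicting the maximality of $[v]$.

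The main obstacle is this last step: a hypothetical perp vertex outside $[v]$ must be excluded using the maximality assumption, and the delicate point is recognizing that $u \in \Jv\p$ with $u \notin [v]$ automatically pushes $u$ into $\Lv$, after which the commutation conditions built into the definition of $\Jv\p$ are exactly strong enough to verify $lk(v) \subseteq st(u)$ and force the contradiction.
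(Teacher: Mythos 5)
Your proof is correct and follows essentially the same route as the paper: both reduce everything to computing $\Jv\p$ via Godelle's formulas, and both use maximality exactly where you do, by observing that a vertex $u$ commuting with all of $\Jv$ satisfies $lk(v)\subseteq \Jv\subseteq st(u)$, hence $v\leq u$ and $[u]=[v]$. The only difference is that you spell out the (correct) verification that $[v]\cap\Jv\p$ is all of $[v]$ or empty according to whether $v\in V_{ab}$ or $V_{fr}$, which the paper leaves implicit in the phrase ``the lemma follows from the descriptions above.''
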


\begin{proof} 
Suppose $[v]$ is maximal and some $u$ commutes with all of $\Jv$.  Then $lk(v) \subseteq \Jv \subseteq st(u)$, so by maximality of $[v]$, $[u]=[v]$.  Thus $J_v\p$ is contained in $[v]$ and the lemma  follows from the descriptions above.
\end{proof}

\begin{lemma} \label{adjacent-joins}
 Suppose $[v]$ and $[w]$ are adjacent vertices in $\G_0$.   Let $J_{v,w}$ denote the intersection of $\Jv$ and $\Jw$.  Then
\[
 C(J_{v,w}) {}= Z (J_{v,w}) = Z([v]) \times Z([w]) \times Z(\Lv \cap \Lw)
 \]
 \[
N(J_{v,w}) {}= A_{J_{v,w}}.
\]
\end{lemma}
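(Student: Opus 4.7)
The plan is to identify $J_{v,w}$ as a concrete join subgraph of $\G$, and then read off all three formulas from Godelle's theorem; the essential point is to prove $J_{v,w}\p \subseteq J_{v,w}$, after which the normalizer and centralizer statements are immediate and the product decomposition of the center is easy bookkeeping.

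First I would pin down the structure of $J_{v,w}$. Pick adjacent representatives $v \in [v]$ and $w \in [w]$. Using the three cases in the definition of $\sim$ (equal; distance one with equal stars; distance two with equal links), the adjacency $v \in lk(w)$ propagates to give $v' \in lk(w')$ for every $v' \in [v]$ and $w' \in [w]$, so $[w] \subseteq \Lv$ and $[v] \subseteq \Lw$. The same style of argument shows every vertex of $\Lv$ is adjacent to every vertex of $[v]$, so $\Jv$ is in fact the induced subgraph of $\G$ on $\Lv \cup [v]$ and coincides with the join there (and similarly for $\Jw$). Intersecting vertex sets and using $[v] \cap [w] = \emptyset$,
\[
J_{v,w} = (\Lv \cap \Lw) \ast [v] \ast [w],
\]
giving $A_{J_{v,w}} = A_{\Lv \cap \Lw} \times A_{[v]} \times A_{[w]}$.

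The key inclusion $J_{v,w}\p \subseteq J_{v,w}$ is then immediate: any $u \in J_{v,w}\p$ commutes with $v$, hence lies in $[v] \cup \Lv$, and likewise lies in $[w] \cup \Lw$; the four resulting combinations collapse (via $[v] \cap [w] = \emptyset$, $[w] \subseteq \Lv$, and $[v] \subseteq \Lw$) to $u \in [v] \cup [w] \cup (\Lv \cap \Lw) = J_{v,w}$. Godelle's formulas then give $N(J_{v,w}) = A_{J_{v,w} \cup J_{v,w}\p} = A_{J_{v,w}}$ and $C(J_{v,w}) = A_{J_{v,w}\p} = A_{J_{v,w} \cap J_{v,w}\p} = Z(J_{v,w})$. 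For the product decomposition, the same case analysis places any $u \in J_{v,w}\p \cap J_{v,w}$ in exactly one of the three factors $[v]$, $[w]$, $\Lv \cap \Lw$; the join structure guarantees $u$ automatically commutes with the other two factors, so the only further condition is that $u$ lie in the perp of its own factor. This yields
\[
J_{v,w}\p \cap J_{v,w} = \bigl([v] \cap [v]\p\bigr) \cup \bigl([w] \cap [w]\p\bigr) \cup \bigl((\Lv \cap \Lw) \cap (\Lv \cap \Lw)\p\bigr),
\]
and applying $Z(\Theta) = A_{\Theta \cap \Theta\p}$ to each of the three classes gives the stated factorization. The main obstacle is the opening case analysis establishing the genuine join structure of $J_{v,w}$ inside $\G$ via the equivalence relation; everything downstream is routine Godelle bookkeeping.
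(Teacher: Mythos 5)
Your proposal is correct and follows the same route as the paper: decompose $J_{v,w}$ as the join $[v]\ast[w]\ast(\Lv\cap\Lw)$, observe that any generator commuting with both $[v]$ and $[w]$ already lies in $J_{v,w}$ so that $J_{v,w}\p\subseteq J_{v,w}$, and then read everything off from Godelle's formulas. You simply spell out in more detail the representative-independence of adjacency and the bookkeeping for the center, which the paper leaves implicit.
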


\begin{proof}  Since $[v]$ and $[w]$ are adjacent, $[v] \subset \Lw$ and $[w] \subset \Lv$.  Thus,  $J_{v,w}$ decomposes as a join,  $J_{v,w}= [v] \ast [w] \ast  (\Lv \cap \Lw)$. Any generator commuting with both $[v]$ and $[w]$ lies in $J_{v,w}$, so $J_{v,w}\p \subset J_{v,w}$.  The lemma now follows from the formulas above.
\end{proof}


\section{Projection homomorphisms}\label{projections}

In this section we describe the projection homomorphism $P$, which will be defined on a finite-index subgroup of $Out(\AG)$.
We begin by reviewing the work of M.~Laurence \cite{Lau95}.  Building on the work of 
H.~Servatius \cite{Ser89}, Laurence described a finite set of generators for $Aut(\AG)$  
as follows.
\begin{enumerate}
 \item \emph{Inner automorphisms} conjugate the entire group by some generator $v$.
  
  \item   \emph{Symmetries}  are induced by symmetries of $\G$ and permute the generators. 

   \item \emph{Inversions}  send a standard generator of $\AG$ to its inverse.

   \item   \emph{Transvections}  occur whenever $v \leq w$, and send 
  $v\mapsto vw$. 
   
  \item  \emph{Partial conjugations}  occur whenever removing the (closed) star of a vertex  $v$  disconnects $\G$. If this happens,  a partial conjugation conjugates all of the generators in one   component of $\G- st(v)$ by $v$.
\end{enumerate}

\begin{definition} The  subgroup  of  $Aut(\AG)$   generated by inner automorphisms, inversions, partial conjugations and transvections is called the {\it pure automorphism group\,} and is denoted $Aut^0(\AG)$.  The image of $Aut^0(\AG)$ in $Out(\AG)$ is the group of {\it pure outer automorphisms\,} and is denoted $Out^0(\AG)$.
\end{definition}

The subgroups $Aut^0(\AG)$ and $Out^0(\AG)$ are easily seen to be normal and of finite index in $Aut(\AG)$ and $Out(\AG)$ respectively.  We remark that if $\AG$ is a free group or free abelian group, then $Aut^0(\AG)=Aut(\AG)$.  


\begin{proposition}\label{representative}  Let $[v]$ be a maximal equivalence class.  Then any $\phi \in Out^0(\AG)$ has a representative  $\phi_v \in Aut^0(\AG)$ which preserves both  $A_{[v]}$ and  $A_{J_{[v]}}$. 
\end{proposition}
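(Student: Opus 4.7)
The plan is to take any word $\psi = \alpha_k \cdots \alpha_1$ in the Laurence generators representing $\phi$ and to prove the sharper statement that some single element $g \in \AG$ satisfies $\psi(A_{\Jv}) = g A_{\Jv} g^{-1}$ and $\psi(A_{[v]}) = g A_{[v]} g^{-1}$ \emph{simultaneously}. Once this is done, $\phi_v := \iota_{g^{-1}}\circ\psi$ lies in $Aut^0(\AG)$ (inner automorphisms are among the Laurence generators), represents $\phi$, and preserves both subgroups on the nose. I will prove the sharper statement by induction on $k$, by checking that each Laurence generator $\alpha$ conjugates both $A_{\Jv}$ and $A_{[v]}$ by a common element $g_\alpha \in \AG$, which turns out to be either trivial or a single standard generator.

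Inversions preserve both subgroups, so $g_\alpha = 1$. For a transvection $\tau_{u,w}$ with $u \leq w$: if $u \in [v]$ then maximality of $[v]$ forces $w \in [v]$; if $u \in \Lv$ then $u \in lk(v)$ gives $v \in st(w)$, so $w \in \Jv$; and if $u \notin \Jv$ neither subgroup is touched. In all cases $g_\alpha = 1$. For a partial conjugation $\pi$ by a vertex $v'$ on a component $C$ of $\G \setminus st(v')$, I split on whether $v' \in \Jv$. If $v' \in \Jv$, then $v'$ itself lies in $A_{\Jv}$ and is fixed by $\pi$; since $\pi$ sends each generator $u \in \Jv \cap C$ to $v' u (v')^{-1}$ and fixes the others, the image subgroup contains $v'$ and hence each original $u = (v')^{-1}\cdot(v' u (v')^{-1})\cdot v'$, so $\pi(A_{\Jv}) = A_{\Jv}$. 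A parallel argument (noting that $v' \in \Lv$ actually implies $[v] \subseteq st(v')$, so that $[v]$ is pointwise fixed) gives $\pi(A_{[v]}) = A_{[v]}$. Thus $g_\alpha = 1$. If $v' \notin \Jv$, then $\pi$ acts on $A_{\Jv}$ either trivially or uniformly as conjugation by $v'$, and the same for $A_{[v]}$; so $g_\alpha \in \{1, v'\}$.

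The main obstacle is this last case, which reduces to showing that $\Jv \setminus st(v')$ is contained in a single connected component of $\G \setminus st(v')$ whenever $v' \notin \Jv$. Since $v' \notin \Jv = [v] \cup lk(v)$, the common-link structure of $[v]$ forces $v' \notin lk(v_i)$ for every $v_i \in [v]$, so $[v] \cap st(v') = \emptyset$. Maximality of $[v]$ then rules out $lk(v) \subseteq st(v')$, as this would give $v \leq v'$ and force $v' \in [v]$, so some $w \in \Lv \setminus st(v')$ must exist. By the join structure $\Jv = [v] \ast \Lv$, this $w$ is adjacent to every vertex of $[v]$, and every other vertex of $\Lv \setminus st(v')$ is likewise adjacent to $[v]$, so $\Jv \setminus st(v')$ is connected in $\G \setminus st(v')$. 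With this in hand, the induction step goes through with $g_{i+1} = \alpha_{i+1}(g_i) \cdot g_{\alpha_{i+1}}$ as the common conjugating element for $\psi_{i+1} = \alpha_{i+1} \circ \psi_i$, and $\phi_v := \iota_{g^{-1}}\circ\psi$ is the desired representative.
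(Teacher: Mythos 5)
Your proof is correct and follows essentially the same route as the paper's: reduce to checking each Laurence generator (your induction on word length with a tracked common conjugator is just an explicit form of ``it suffices to check generators''), with the only substantive point in both arguments being that when the conjugating vertex $v'$ lies outside $\Jv$, the set $\Jv \setminus st(v')$ meets only one component of $\G \setminus st(v')$, so the partial conjugation acts on $A_{\Jv}$ either trivially or as global conjugation by $v'$. One small wrinkle: when $[v]$ is abelian with more than one element, maximality only hands you a witness in $lk(v)\setminus st(v')$, which could a priori lie in $[v]$ rather than in $\Lv$ as you assert --- but connectivity is immediate in that case since $[v]$ is a clique and every vertex of $\Lv\setminus st(v')$ is adjacent to all of it, while in the non-abelian case $\Lv = lk(v)$ and your maximality argument is exactly the paper's.
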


\begin{proof} 
 It suffices to check that the proposition holds for each of the generators of $Out^0(\AG)$. It is clear for inversions.
Represent $\phi$ by some automorphism $\hat \phi\in Aut^0(\AG)$. 

{\it Partial conjugations:}  Suppose $\hat\phi$ is a partial conjugation by a generator $w$.  If $w$ is not in $\Jv$, then $d(w,v)\ge 2$, so $st(w)\cap \Jv\subseteq \Lv$.  If $[v]$ is abelian, $\Jv-st(w)$ is clearly connected.  If $[v]$ is nonabelian, then maximality of $[v]$ implies that $st(w)\cap\Jv$ is not all of $\Lv$, so that  in this case  too $\Jv-st(w)$ is connected.  Thus $\hat\phi$ is either trivial on $A_{\Jv}$ or acts as conjugation by $w$ on all of $A_{\Jv}$.  Composing $\hat\phi$ with the inner automorphism associated to $w$ produces the desired $\phi_v$. 
 
 If $w \in \Jv = [v] \ast \Lv$, then conjugation by $w$ preserves the two factors $A_{[v]}$ and $A_{\Lv}$, hence the same is true for the partial conjugation $\hat\phi$, so we may take $\phi_v=\hat\phi$.  

{\it Transvections:}  Suppose $u \leq w$ and $\hat\phi$ is the transvection $u \mapsto uw$. If $u$ is not in $\Jv$ then $\hat\phi$ is the identity on $A_{\Jv}$.    If $u\in \Lv$, then $\hat\phi$ fixes $A_{[v]}$, and $v\in lk(u)\subseteq st(w)$, so $w\in st(v)\subseteq \Jv$ and $\hat\phi$ preserves   $A_{\Jv}$.  If $u\in[v]$, then maximality of $[v]$ implies $[v]=[u]=[w]$, so the transvection preserves both $A_{[v]}$ and $A_{\Jv}$.  In all cases we may take $\phi_v=\hat\phi$.  
\end{proof}

The representative $\phi_v$ described in Proposition~\ref{representative} is well defined up to conjugation by an element of the normalizer of $A_{\Jv}$.  In light of Lemma~\ref{centers}, the restriction of $\phi_v$ to $A_{\Jv}$ is well defined up to an inner automorphism of $A_{\Jv}$.  Moreover, since $\phi_v$ preserves $A_{[v]}$, it projects to an automorphism of $A_{\Lv} \cong A_{\Jv} / A_{[v]}$. This immediately gives the following corollary.

\begin{corollary}  For every maximal $[v]$ there is a restriction homomorphism
\[
R_{[v]} :  Out^0(\AG) \to Out(A_{\Jv})
\]
and a projection homomorphism
\[
P_{[v]} :  Out^0(\AG) \to Out(A_{\Lv})
\]
\end{corollary}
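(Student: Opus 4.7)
The plan is to leverage Proposition~\ref{representative} together with Lemma~\ref{centers} to promote the choice-dependent representative $\phi_v$ to a well-defined outer automorphism of the relevant subgroup. First I would fix $\phi \in Out^0(\AG)$ and, for a maximal $[v]$, choose a representative $\phi_v \in Aut^0(\AG)$ preserving both $A_{[v]}$ and $A_{\Jv}$, as supplied by Proposition~\ref{representative}. The issue is measuring the indeterminacy in this choice: if $\phi_v'$ is another such representative, then $\phi_v' = i_g \circ \phi_v$ for some inner automorphism $i_g$ of $\AG$, and the requirement that $i_g$ preserve $A_{\Jv}$ forces $g \in N(\Jv)$.

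Here the maximality assumption enters via Lemma~\ref{centers}, which identifies $N(\Jv)$ with $A_{\Jv}$ itself. Consequently the restrictions $\phi_v|_{A_{\Jv}}$ and $\phi_v'|_{A_{\Jv}}$ differ only by an inner automorphism of $A_{\Jv}$, so the assignment $\phi \mapsto [\phi_v|_{A_{\Jv}}]$ is a well-defined map $R_{[v]} : Out^0(\AG) \to Out(A_{\Jv})$. To produce $P_{[v]}$ I would then use that $\phi_v$ also preserves the factor $A_{[v]}$ in the direct product decomposition $A_{\Jv} = A_{\Lv} \times A_{[v]}$, so it induces an automorphism of the quotient $A_{\Jv}/A_{[v]} \cong A_{\Lv}$. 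An inner automorphism of $A_{\Jv}$ by $g = g_L g_{[v]}$ projects to the inner automorphism of $A_{\Lv}$ by $g_L$, so the ambiguity in $R_{[v]}$ descends cleanly to an ambiguity by inner automorphisms on $A_{\Lv}$, giving the well-defined $P_{[v]} : Out^0(\AG) \to Out(A_{\Lv})$.

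Finally I would verify the homomorphism property by observing that if $\phi_v$ and $\psi_v$ are representatives for $\phi$ and $\psi$ satisfying the conclusions of Proposition~\ref{representative}, then $\phi_v \circ \psi_v$ is such a representative for $\phi \psi$, so restriction and passage to the quotient are compatible with composition. The only real obstacle is the well-definedness argument for $R_{[v]}$, and it reduces entirely to the identification $N(\Jv) = A_{\Jv}$ from Lemma~\ref{centers}; everything else is routine bookkeeping with the join decomposition.
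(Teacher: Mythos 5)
Your argument is correct and follows the paper's own reasoning essentially verbatim: the paper likewise observes that two representatives from Proposition~\ref{representative} differ by conjugation by an element of $N(\Jv)$, invokes Lemma~\ref{centers} to identify $N(\Jv)=A_{\Jv}$ so that the restriction is well defined up to an inner automorphism of $A_{\Jv}$, and then passes to the quotient $A_{\Jv}/A_{[v]}\cong A_{\Lv}$ to obtain $P_{[v]}$. No substantive differences.
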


We now assemble these homomorphisms, one  for each maximal equivalence class  $[v]$, to obtain a restriction homomorphism  
\[
R= \prod R_{[v]} :  Out^0(\AG) \to \prod Out(A_{\Jv})
\]
and a projection homomorphism
 \[
P= \prod P_{[v]} :  Out^0(\AG) \to \prod Out(A_{\Lv}).
\]

\section{The kernel of the projection homomorphism}

In order to use the projection homomorphism $P$ to obtain information about $Out(\AG)$ we need to understand basic properties of its kernel.   We first consider the kernel of the restriction homomorphism $R$.  
\begin{theorem}\label{K}
The kernel $K$ of the homomorphism $R$ is a finitely generated free abelian group.  
\end{theorem}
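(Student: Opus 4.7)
The approach is to associate to each $\phi \in K$ a system of conjugating elements indexed by the maximal equivalence classes, and then identify $K$ with a subgroup of an explicit finitely generated free abelian group built from the centers of the special subgroups $A_{\Jv}$.

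First I would show that every $\phi \in K$ acts by conjugation on each generator of $\AG$.  By Proposition \ref{representative}, for every maximal $[v]$ there is a representative $\phi_v \in Aut^0(\AG)$ of $\phi$ preserving $A_{\Jv}$; since $\phi \in \ker R$, the restriction $\phi_v|_{A_{\Jv}}$ is inner in $A_{\Jv}$, so $\phi_v$ acts on $A_{\Jv}$ as conjugation by some $g_v \in A_{\Jv}$.  Combined with Lemma \ref{G_0} (every vertex lies in some maximal $\Jv$), this shows $\phi(u)$ is conjugate to $u$ in $\AG$ for every vertex $u$; in particular $\phi$ acts trivially on the abelianization of $\AG$.

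Next I would analyze the ambiguities and compatibility of the $g_v$.  By Lemma \ref{centers}, the representative $\phi_v$ is unique up to conjugation by $N(\Jv) = A_{\Jv}$, so $g_v$ is determined modulo $Z(\Jv)$ and modulo conjugation in $A_{\Jv}$.  For adjacent maximal classes $[v], [w]$ in $\G_0$, the restrictions $\phi_v|_{A_{J_{v,w}}}$ and $\phi_w|_{A_{J_{v,w}}}$ both represent $\phi|_{A_{J_{v,w}}}$ in $Out(A_{J_{v,w}})$, and Lemma \ref{adjacent-joins} then forces $g_v g_w^{-1}$ to lie in $A_{J_{v,w}} \cdot Z(J_{v,w}) = A_{J_{v,w}}$.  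Using the connectedness of $\G_0$ (Lemma \ref{G_0}) and the inner-automorphism freedom in choosing a representative of $\phi$ in $Aut^0(\AG)$, I would normalize the system $(g_v)$ so that it depends only on the outer class of $\phi$ and takes values in an abelian group $E$ assembled from the centers $Z(\Jv) = A_{\Jv \cap \Jv\p}$.  Each such center is finitely generated free abelian, being the special subgroup of $\AG$ generated by a set of pairwise commuting vertices, so $E$ is itself finitely generated free abelian.

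The assignment $\phi \mapsto (g_v)$ then gives an injective homomorphism $K \to E$, injectivity holding because every generator of $\AG$ lies in some $\Jv$ and its image under $\phi$ is determined by the corresponding $g_v$.  It follows that $K$ is a finitely generated free abelian group.

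The main obstacle is this middle step: isolating the correct free abelian target $E$ and verifying that the map $\phi \mapsto (g_v)$ is well-defined, a homomorphism, and injective, with the multiple layers of inner-automorphism ambiguity (inside $A_{\Jv}$, inside $A_{J_{v,w}}$, and inside $\AG$) all properly accounted for.  Once this scaffolding is in place, both the free abelian structure and finite generation follow from the fact that centers of special subgroups of $\AG$ are themselves finitely generated free abelian groups.
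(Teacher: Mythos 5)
Your overall skeleton---choose representatives that restrict to the identity on each $A_{\Jv}$, so that a canonical representative of $\phi$ acts on $A_{\Jv}$ by conjugation by some $g_v$, then compare the $g_v$ over edges of $\G_0$ via Lemma~\ref{adjacent-joins}---is the same as the paper's. But the step you yourself flag as the main obstacle is exactly where the proposal breaks down: the target $E$ ``assembled from the centers $Z(\Jv)$'' cannot be correct. By Lemma~\ref{centers}, $Z(\Jv)=C(\Jv)$ is trivial when $[v]$ is non-abelian and equals $A_{[v]}$ when $[v]$ is abelian; if the normalized $g_v$ could be taken to lie in $Z(\Jv)=C(\Jv)$, then conjugation by $g_v$ would be trivial on $A_{\Jv}$ for every maximal $[v]$ and $K$ would be trivial. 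That is false: a partial conjugation of one component of $\G - st(u)$ by $u$ can lie in $K$ and act on some $A_{\Jv}$ as conjugation by the non-central element $u$. The $g_v$ are genuinely arbitrary elements of $\AG$; what lies in a center is only the \emph{difference} $g_w^{-1}g_v\in C(J_{v,w})=Z(J_{v,w})$ for adjacent classes (the centralizer, not merely $A_{J_{v,w}}$ as you wrote). Relatedly, your injectivity argument only shows that the tuple of actual group elements $(g_v)$ determines $\phi$; it says nothing about injectivity after projecting into whatever abelian quotient $E$ is, and that projection is where all the content lies.

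The paper's resolution is different: it sends $g_v$ to its image $\bar g_v$ in the abelianization $\Z^{V-[v]}$. The resulting map $f_v$ is a homomorphism precisely because every $\phi\in K$ sends each generator to a conjugate of itself and hence acts trivially on $H_1(\AG)$, so $\overline{\phi(h)}=\bar h$; this is also what makes the (a priori non-obvious) commutativity of $K$ fall out. Since abelianizing loses information, injectivity of $f=\prod f_v$ then requires a genuine argument: for $\phi\in\ker f$ and adjacent maximal classes, $g_w^{-1}g_v\in Z([v])\times Z([w])\times Z(\Lv\cap\Lw)$; the vanishing of $f_v(\phi)$ and $f_w(\phi)$ kills the $Z(\Lv\cap\Lw)$ factor, and telescoping along a path in $\G_0$ from the base class (where the conjugator is $1$) shows $g_v\in Z([v])$, i.e.\ $\phi$ acts trivially on $A_{\Jv}$ for every maximal $[v]$. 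Your proposal contains neither the abelianization device nor this telescoping step, so as written it does not produce an embedding of $K$ into a finitely generated free abelian group.
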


\begin{proof}  If $\G_0$ consists of a single vertex $[v]$, then $\G = \Jv$ so the kernel is trivial.  So assume that there is more than one maximal equivalence class.

By definition, elements of $K$ are outer automorphisms.   We begin by choosing a canonical automorphism to represent each element of $K$.  First note that for any element $\phi$ of $K$ and any maximal $[v]$, we can choose a representative automorphism $\phi_v$ such that the restriction of $\phi_v$ to $A_{\Jv}$ is the \emph{identity} map.  This representative is unique up to conjugation by an element of the centralizer $C(\Jv)$.  In particular, if $[v]$ is non-abelian, then $\phi_v$ is unique.  

Suppose $V_{fr} \neq \emptyset$.  Choose a non-abelian class $[y]$.  Then for any element $\phi$ of $K$, we take $\phi_0=\phi_{y}$ as our canonical representative.  If $V_{fr} = \emptyset$, choose a pair of adjacent maximal classes $[y]$ and $[z]$.  Note that  $\phi_{y}=c(a)\phi_{z}$where $c(a)$ denotes conjugation by an element $a \in C(J_{y,z})$.  By Lemma \ref{adjacent-joins}, $a=rs$ for some $r \in A_{[y]}, s \in A_{\Ly}$. Set
\[
\phi_0 = c(r^{-1})\phi_y = c(s)\phi_z.
\]
 Then $\phi_0$ has the property that (i) it restricts to the identity on vertices of $\Jy$ and (ii) it acts on $\Jz$ as conjugation by an element of $A_{\Ly}$.  If $\phi_1$ is any other representative of $\phi$ with these two properties, then it differs from $\phi_0$ by conjugation by an element of $A_{[y]} \cap A_{\Ly} =\{1\}$.  Thus, $\phi_0$ is the unique such representative and we designate it as our canonical representative.  

The  properties which characterize canonical representatives are preserved under composition, thus the map $\phi \mapsto \phi_0$ defines a homomorphism of $K$ into $Aut(\AG)$.  For the remainder of the proof we view $K$ as a subgroup of the automorphism group by identifying $\phi$ with $\phi_0$.

To prove the theorem we will define a homomorphism $f$ of $K$ into a free abelian group and prove that $f$ is injective.    Recall that $V$ is the set of vertices of $\G$ so abelianizing gives a homomorphism $\AG \to  \Z^V$. Denote the abelianization of $g$ by $\bar g$.  For each maximal equivalence class $[v]$, define a homomorphism $f_v : K \to \Z^{V-[v]}$ as follows.  An element $\phi \in K$,  acts on $\Jv$ as conjugation by some $g \in \AG$.  The element $g$ is unique up to multiplication by an element of $C(\Jv) \subset A_{[v]}$, thus $\bar g$ determines a well defined element of $\Z^{V-[v]}$ (which by abuse of notation we will also denote $\bar g$).  

We claim that $f_v(\phi)=\bar g$ is a homomorphism.  For suppose $\rho$ is another element of $K$ and suppose $\rho$ acts on $\Jv$ as conjugation by $h$. Then $\phi \circ \rho$ acts on $\Jv$ as conjugation by $\phi(h) g$.  Since $\phi$ takes every generator to a conjugate of itself, it leaves the abelianization unchanged.  That is,  $\overline{(\phi(h))}= \bar h$, so $f_v(\phi \circ \rho)=\overline{ \phi(h) g}=\bar g + \bar h = f_v(\phi) + f_v(\rho)$. 

Now consider the product homomorphism $f=\prod f_v$ taken over all maximal equivalence classes $[v]$, 
\[
f : K \to \prod \Z^{V-[v]}. 
\]
To complete the proof, we will show that $f$ is injective.  Suppose $\phi$ lies in the kernel of $f$ and 
suppose $[v]$ and $[w]$ are adjacent maximal classes.  If $\phi$ acts on $\Jv$ as conjugation by $g_v$ and on $\Jw$ as conjugation by $g_w$, then $g_w^{-1}g_v$ lies in the centralizer of $J_{v,w}$, namely in the abelian group $Z([v]) \times Z([w]) \times Z(\Lv \cap \Lw)$.  Since $f_v(\phi)=\overline{g_v}=0$ and $f_w(\phi)=\overline{g_w}=0$, the exponent sum of any $u \in \Lv \cap \Lw$ is zero in both $g_v$ and $g_w$, hence also in $g_w^{-1}g_v$. It follows that  $g_w^{-1}g_v$  lies in $Z([v]) \times Z([w])$.  

Now consider an edge path in $\G_0$ from the base vertex $[y]$ to an arbitrary vertex $[v]$,
\[
[y]=[v_0], [v_1], \dots ,[v_n]=[v],
\]
and suppose $\phi$ acts on $J_{[v_i]}$ as conjugation by $g_i \in \AG$.  Our canonical representative $\phi$ was chosen so that $g_0=1$.  By the discussion above, $g_n=(g_0^{-1}g_1)(g_1^{-1}g_2)\cdots (g_{n-1}^{-1}g_n)$ is of the form $g_n=a_0a_1a_2 \dots a_n$, where $a_i \in Z([v_i])$.  Since $\phi$ lies in the kernel of $f$,  $f_{[v]}(\phi)=0$, so all of the $a_i$'s, except possibly the last one $a_n$, are trivial.  Thus, $\phi$  acts on $\Jv$ as conjugation by an element of its center, $Z([v])$, i.e., $\phi$ acts trivially on $\Jv$.  Since $[v]$ was arbitrary, we conclude that $\phi$ is the identity automorphism.
\end{proof}

In the case that $\G$ is connected and has no triangles, a stronger version of this theorem is proved in the authors' previous paper  with J.~Crisp \cite{ChCrVo}.  In that case, we give the exact rank of $K$ and determine an explicit set of partial conjugations which generate $K$.

Now let $K_P$ be the kernel  of the projection homomorphism
\[
P= \prod P_{[v]} :  Out^0(\AG) \to \prod Out(A_{\Lv}).
\]
Define a vertex $v$ to be {\it leaf-like} if
\begin{enumerate}
\item  $\Lv$ contains a unique maximal class $[w]$, and
\item  $[v] < [w]$.
\end{enumerate}
(An easy exercise shows that if $\G$ has no triangles, then a vertex is leaf-like if and only if it is a leaf, i.e. has valence 1.)
Since $d(v,w)=1$, it follows from Lemma \ref{chain} that $w$ must belong to $V_{ab}$. 
Since $[v] < [w]$, there is a transvection $t(v,w)$ taking $v \mapsto vw$.  We will call this a {\it leaf transvection}.

\begin{theorem}\label{K_P}  Assume $\G_0$ has at least two vertices. Then the kernel $K_P$ is a free abelian group generated by $K$ and the set of leaf transvections.
\end{theorem}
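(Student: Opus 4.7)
The plan is to establish containment of $K_P$ in $\langle K, \text{leaf transvections}\rangle$ by analyzing the abelianized action of an element of $K_P$, and then to deduce the free abelian structure. The forward inclusion is direct: $K \subseteq K_P$ since $P_{[v]}$ factors through $R_{[v]}$ via the quotient $A_{\Jv} \twoheadrightarrow A_{\Lv}$, and for a leaf transvection $t = t(v,w)$ and maximal $[u]$, either $v \notin \Ju$ (so $t$ fixes $A_{\Ju}$) or $v$ is non-maximal so $v \in \Lu$, whence $u \in \Lv$ forces $[u] = [w]$ by uniqueness, and then $t(v) = vw \equiv v \pmod{A_{[u]}}$, giving $P_{[u]}(t) = 1$.

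For the reverse inclusion, I will analyze the abelianized action $\phi^{\mathrm{ab}} \in GL(\Z^V)$ of $\phi \in K_P$. Since every element of $Aut^0(\AG)$ is a product of inner automorphisms, inversions, partial conjugations, and transvections $t(x,y)$ with $x \leq y$, the matrix $\phi^{\mathrm{ab}}$ is upper triangular with respect to the partial order on $V$, with diagonal entries $\pm 1$. Proposition~\ref{representative} makes each $\Z^{\Ju}$ invariant under $\phi^{\mathrm{ab}}$, and $P_{[u]}(\phi) = 1$ makes the induced action on $\Z^{\Lu} = \Z^{\Ju}/\Z^{[u]}$ trivial; together these give $\phi^{\mathrm{ab}}(e_{v'}) - e_{v'} \in \Z^{[u]}$ whenever $v' \in \Lu$. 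For each maximal $[v]$, take a representative $\phi_v \in Aut^0(\AG)$ preserving both $A_{\Jv}$ and $A_{[v]}$; after a further normalization by an inner automorphism by an element of $A_{\Lv}$, $\phi_v$ induces the identity on $A_{\Lv}$, so $\phi_v|_{A_{\Jv}}$ decomposes as $\ell \mapsto \ell \cdot a_v(\ell)$ on $A_{\Lv}$ (for a homomorphism $a_v : A_{\Lv} \to Z(A_{[v]})$) and $x \mapsto \alpha_v(x)$ on $A_{[v]}$. Abelianizing identifies $a_v(v')$ with the $\Z^{[v]}$-component of $\phi^{\mathrm{ab}}(e_{v'}) - e_{v'}$. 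The above constraints then force $a_v(v') = 0$ unless $[v]$ is the unique maximal class adjacent to $[v']$ (else the intersection of the relevant $\Z^{[u]}$'s is zero) and $[v'] < [v]$ (else no $y > v'$ lies in $[v]$); together, $a_v(v') \neq 0$ implies $v'$ is leaf-like with target $[v]$ (and $[v]$ abelian by Lemma~\ref{chain}). An analogous argument, using an adjacent maximal class $[u] \in \G_0$ (which exists since $\G_0$ is connected with at least two vertices), shows $\alpha_v$ is the identity for abelian $[v]$ and inner in $A_{[v]}$ for non-abelian $[v]$. Choosing a product $T$ of leaf transvections whose abelianizations match the nonzero $a_v(v')$'s then yields $\phi T^{-1} \in K$, so $K_P \subseteq \langle K, \text{leaf transvections}\rangle$.

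The free abelian structure follows from three facts: leaf transvections pairwise commute; $K$ is free abelian by Theorem~\ref{K}; and for $k \in K$ and a leaf transvection $t$, the commutator $[k,t]$ restricts on each $A_{\Jv}$ to the commutator of an inner automorphism of $A_{\Jv}$ (coming from $k$) with either the identity or a simple shear $v' \mapsto v' w$, which is trivial by a direct computation using the direct product $A_{\Jv} = A_{\Lv} \times A_{[v]}$ (with $w \in A_{[v]}$ commuting with all of $A_{\Lv}$). Applying the injective homomorphism $f$ from the proof of Theorem~\ref{K} then yields $[k,t] = 1$ in $K$, and $K \cap \langle \text{leaf transvections}\rangle = 1$ because any nontrivial product of leaf transvections has nontrivial abelianization while elements of $K$ abelianize to the identity. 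Hence $K_P = K \oplus \langle \text{leaf transvections}\rangle$ is free abelian. The main obstacle is the identification that $a_v$ is supported on leaf-like vertices: it requires carefully combining the upper-triangular structure of pure automorphisms, the subspace and quotient invariance coming from $P_{[u]}(\phi) = 1$, and Lemma~\ref{chain} controlling how classes relate under the partial order.
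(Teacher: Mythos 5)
Your overall strategy for the hard direction is genuinely different from the paper's and, with one key claim filled in, it works. The paper identifies the leaf-like vertices by a group-theoretic argument: it first uses the fact that representatives over distinct maximal classes differ by an inner automorphism (so a nontrivial $g$ with $\phi_w(v)=vg$ forces $[w]$ to be the \emph{unique} maximal class in $\Lv$), and then invokes Laurence's centralizer formula to show $C(v)\subset C(w)$, hence $[v]<[w]$ and $w\in V_{ab}$. You instead read everything off the abelianization, using the triangularity of $Aut^0(\AG)$ with respect to $\leq$ to get $[v']<[v]$ and the disjointness of the blocks $\Z^{[u]}$ to get uniqueness of the maximal class. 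This is arguably cleaner, but it is entirely carried by your parenthetical assertion that $a_v$ takes values in $Z(A_{[v]})$: without that, when $[v]$ is a non-abelian class the defect $a_v(v')$ could a priori be a nontrivial element of the commutator subgroup of $A_{[v]}$, invisible to $\Z^V$, and your whole argument collapses. The claim is true — since $v'$ commutes with $A_{[v]}$, applying $\phi_v$ shows $a_v(v')=v'^{-1}\phi_v(v')$ commutes with $\phi_v(A_{[v]})=A_{[v]}$ — but it must be proved, as it is exactly what lets the abelianization substitute for the paper's conjugacy and centralizer arguments.

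The weak point is your proof that $K$ commutes with the leaf transvections. An element $k\in K$ restricted to $A_{\Jv}$ is conjugation by some $g\in\AG$ that need \emph{not} lie in $A_{\Jv}$ (e.g.\ a partial conjugation by a vertex outside $\Jv$ that conjugates all of $\Jv$), so $[k,t]$ is not literally "the commutator of an inner automorphism of $A_{\Jv}$ with a shear," and the direct computation you gesture at does not apply as stated. Moreover, invoking the injective homomorphism $f$ of Theorem~\ref{K} presupposes that $[k,t]$ already lies in $K$, which is part of what needs proving. The paper sidesteps all of this by citing Laurence's Theorem~2.2 to conclude that $K$ is contained in the subgroup generated by partial conjugations, and then checking directly that a leaf transvection commutes with every partial conjugation (since $v$ and $w$ are adjacent they cannot be separated by removing a star). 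You should either adopt that argument or redo your computation with the conjugating element taken in $\AG$ rather than in $A_{\Jv}$.
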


\begin{proof}  From the definition of ``leaf-like", it is easy to see that the leaf transvections $t(v,w)$ generate a free abelian group contained in the kernel of $K_P$ and that this subgroup has trivial intersection with $K$.  Thus, it suffices to show that leaf transvections commute with $K$ and that $K_P$ is generated by $K$ and the leaf transvections.  

Since every element of $K$ sends each generator to a conjugate of itself, Theorem 2.2 of \cite{Lau95} says that $K$ lies in the subgroup generated by  partial conjugations.  
 We claim that $t(v,w)$ commutes with all partial conjugations and hence with all of $K$.  The only case in which this could fail is the case of a partial conjugation by $u$ where $v$ and $w$ lie in different components of $\G \backslash st(u)$.  But this is impossible since $v$ and $w$ are connected by an edge.  If the edge lies in $st(u)$, then so do $v$ and $w$, so $u$ commutes with both of them.

It remains to show that $K$ and the leaf transvections generate all of $K_P$.  Suppose $\phi \in K_P$.  Let $[w]$ be maximal and $\phi_w$ be a representative automorphism preserving $A_{[w]}$ and $A_{\Jw}$. Since inner automorphisms by elements of $\Lw$ also preserve these subgroups, we may assume that $\phi_w$ projects to the identity automorphism on $A_{\Lw}$, so that 
 for any $v \in \Lw$, $\phi_w(v)=vg$ for some $g \in A_{[w]}$.  If  $[u]$ is another maximal equivalence class with $v \in \Lu$, then we also have $\phi_u(v)=vh$ for some $h \in A_{[u]}$.  However, $\phi_u$ and $\phi_v$ differ by an inner automorphism.  Since no (non-trivial) element of $A_{[w]}$ is conjugate to an element of $A_{[u]}$, this is impossible unless $g=h=1$.  

So suppose that $g \neq 1$ and $[w]$ is the unique maximal equivalence class with $v \in \Lw$  (or equivalently, $[w]$ is the unique maximal class in $\Lv$).   We claim that $C(v) \subset C(w)$.
In Theorem 1.2 of  \cite{Lau95}, Laurence gives a formula for the centralizer of a cyclically reduced element $x \in \AG$.  An easy corollary of his formula shows that if $x$ is a product of two commuting elements, $x=x_1x_2$, with disjoint support (i.e. $x_i \in A_{\Theta_i}$, with $\Theta_1$ and $\Theta_2$ disjoint, commuting set of vertices), then $C(x)=C(x_1) \cap C(x_2)$.  In particular, we have $C(vg)=C(v) \cap C(g) \subset A_{\Jv}$.  Applying the automorphism $\phi_w^{-1}$, then gives $C(v) \subset  A_{\Jv}$. 

On the other hand, we also have $A_{[w]} \subset C(v)$, so applying $\phi_w$ gives
$A_{[w]} \subset C(vg) \subset C(g)$.  The centralizer of an element in a non-abeliean free group cannot contain the entire free group, so we conclude that $w$ must lie in $V_{ab}$, and it follows that $C(g) = A_{\Jv} = C(w)$.  Combining these we get $C(v) \subset C(w)$, or equivalently, $st(v) \subset st(w)$.  Thus $[v] < [w]$ and $[v]$ is leaf-like.

In light of the discussion above, we can compose $\phi$ with an element of the leaf transvection group to obtain an outer automorphism $\tilde\phi$ such that for every $[w] \in \G_0$, there is a representative $\tilde\phi_w$ which preserves $A_{\Jw}$ and acts as the identity on $\Lw$.  If $[v]$ is adjacent to $[w]$  in $\G_0$, then $[w]$ is contained in $\Lv$ so there is another representative $\tilde\phi_v$ which preserves $A_{\Jv}$ and acts as the identity on $[w]$. (Here we are using the hypothesis that $\G_0$ contains at least two vertices.)  These two representatives differ by an inner automorphism which preserves $A_{J_{v,w}}$, so  $\tilde\phi_w$ acts on $[w]$ as conjugation by an element $g \in N(J_{v,w}) \subset A_{\Jw}$.   Since $[w]$ commutes with $\Lw$, we may assume that $g$ lies in $A_{[w]}$.  We conclude that the restriction of $\tilde\phi_w$ to $A_{\Jw}$ is conjugation by an element of $A_{[w]}$, an inner automorphism.   This holds for all maximal $[w]$, hence $\tilde\phi$ lies in $K$.   
\end{proof}

It remains to consider the special case in which $\G_0$ consists of a single vertex.

\begin{lemma}
The following are equivalent
\begin{enumerate}
\item $\G_0$ consists of a single vertex $[v]$.
\item $\G = \Jv$ for some $v \in V_{ab}$.
\item The center of $\AG$ is non-trivial.
\end{enumerate}
\end{lemma}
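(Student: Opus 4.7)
The plan is to prove the equivalence cyclically, $(1) \Rightarrow (2) \Rightarrow (3) \Rightarrow (1)$, since each individual implication is short in this order.

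For $(1) \Rightarrow (2)$, I would first apply Lemma~\ref{G_0}: if $[v]$ is the unique maximal equivalence class then every $w \in V$ lies in $\Jv$, so as vertex sets $V = \Lv \cup [v]$. Next I would argue that $v \in V_{ab}$. For any $u \in \Lv$, uniqueness of the maximal class forces $[u] < [v]$, and $u \in lk(v)$ gives $d(u,v) = 1$, so applying Lemma~\ref{chain} to the length-two chain $[u] < [v]$ forces $v \in V_{ab}$. The edge case $\Lv = \emptyset$ reduces to $V = [v]$, in which connectedness of $\G$ rules out a non-abelian $[v]$ of size greater than one (it would be totally disconnected in $\G$). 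Finally I would upgrade the vertex-set equality to an actual join decomposition of graphs: given $u \in \Lv$ and $v' \in [v]$ with $v' \neq v$, since $v \in V_{ab}$ we have $st(v) = st(v')$, so $u \in lk(v) \subseteq st(v')$, and since $u \neq v'$ this provides the required edge between $u$ and $v'$.

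For $(2) \Rightarrow (3)$, the join decomposition immediately yields a direct product splitting $\AG = A_{\Lv} \times A_{[v]}$, and because $v \in V_{ab}$ the second factor is a non-trivial free abelian group sitting inside $Z(\AG)$.

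For $(3) \Rightarrow (1)$, I would invoke Godelle's formula $Z(\AG) = A_{\G \cap \G\p}$: a non-trivial centre supplies a vertex $v \in \G \cap \G\p$, i.e.\ one adjacent to every other vertex, so $st(v) = V$. Then for any $w \in V$ one has $lk(w) \subseteq V = st(v)$, so $w \leq v$; every class is therefore $\leq [v]$, which forces $[v]$ to be the unique maximal class and $\G_0 = \{[v]\}$.

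I expect the main obstacle to lie in $(1) \Rightarrow (2)$, specifically in ruling out $v \in V_{fr}$ via Lemma~\ref{chain} and then promoting the containment $V \subseteq \Jv$ to a genuine edge-by-edge identification $\G = \Lv \ast [v]$; the other two implications should fall out cleanly from Godelle's centralizer formulas together with Lemma~\ref{G_0}.
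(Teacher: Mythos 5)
Your proposal is correct and follows essentially the same route as the paper: the same cyclic order of implications, with Lemma~\ref{G_0} driving $(1)\Rightarrow(2)$ and Godelle's formula $Z(\AG)=A_{\G\cap\G\p}$ driving $(3)\Rightarrow(1)$. The only cosmetic difference is in $(1)\Rightarrow(2)$, where you deduce $v\in V_{ab}$ by applying Lemma~\ref{chain} to the chain $[u]<[v]$ with $d(u,v)=1$, while the paper argues directly that a non-abelian $[v]$ would satisfy $[v]\subset lk(u)$ but $[v]\nsubseteq st(v)$, contradicting $[u]\leq [v]$ --- two phrasings of the same underlying fact.
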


\begin{proof}  (ii) $\Rightarrow$ (iii) follows from Lemma~\ref{centers}.  (iii) $\Rightarrow$ (i) is clear since if $v \in \G\p$, then $st(v)=\G$ so $[v]$ is necessarily the unique maximal class.  For (i) $\Rightarrow$ (ii), note that for any $u \in \Lv$, $[v] \subset lk(u)$.  If $[v]$ is non-abelian, then $[v] \nsubseteq st(v)$,  so $[u] \nleq [v]$.  Hence if $[v]$ is the unique vertex in $\G_0$, it must be abelian, and it follows from Lemma~\ref{G_0} that $\G=\Jv$.
\end{proof}

\begin{proposition} \label{Out(G_0)}
If $\G_0$ consists of a single vertex $[v]$, then 
\[
Out(\AG)= Tr \rtimes (GL(A_{[v]}) \times Out(A_{\Lv}))
\]
where $Tr$ is the free abelian group generated by the leaf transvections $t(u,w)$ with $u \in \Lv$ and  $w \in [v]$.  
\end{proposition}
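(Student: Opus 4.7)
By the preceding lemma, the hypothesis $\G_0 = \{[v]\}$ forces $\G = \Jv$ with $v \in V_{ab}$, so $\AG = A_{[v]} \times A_{\Lv}$ and the free abelian factor $A_{[v]}$ is precisely the center $Z(\AG)$. In particular $A_{[v]}$ is characteristic, so every automorphism of $\AG$ preserves it and descends to an automorphism of $\AG/A_{[v]} \cong A_{\Lv}$. Because inner automorphisms fix the center and push forward to inner automorphisms on the quotient, the two pieces of data assemble into a well-defined homomorphism
\[
\pi \colon Out(\AG) \to GL(A_{[v]}) \times Out(A_{\Lv}).
\]
To split $\pi$, I send $(\alpha,\beta)$ to the outer class of $\alpha \times \tilde\beta$ for any lift $\tilde\beta \in Aut(A_{\Lv})$ of $\beta$; a change of lift by an inner automorphism of $A_{\Lv}$ alters $\alpha \times \tilde\beta$ by the corresponding inner automorphism of $\AG$, so this splitting $s$ is well-defined and satisfies $\pi \circ s = \mathrm{id}$.

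The main step is identifying $\ker\pi$ with $Tr$. First, every $u \in \Lv$ is leaf-like in this situation, since $st(v) = \G$ forces $lk(u) \subseteq st(v)$ (giving $[u] < [v]$) and $[v]$ is manifestly the unique maximal class in $\Lu$. Each transvection $t(u,w)$ with $u \in \Lv$ and $w \in [v]$ is therefore a leaf transvection, and it fixes $A_{[v]}$ while acting as the identity on $A_{\Lv} = \AG/A_{[v]}$ (because $w$ is central), so $Tr \subseteq \ker\pi$. For the reverse inclusion, take $\phi \in \ker\pi$ and a representative $\hat\phi$. After composing with an appropriate inner automorphism of $\AG$, I may assume $\hat\phi$ induces the identity on $A_{\Lv}$; its restriction to $A_{[v]}$ is automatically the identity, because inner automorphisms fix the center. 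Then $\hat\phi(w) = w$ for $w \in [v]$ and $\hat\phi(u) = u g_u$ for each $u \in \Lv$, with $g_u \in A_{[v]}$; writing $g_u$ in the standard basis $[v]$ of the free abelian group $A_{[v]}$ exhibits $\hat\phi$ as a product of leaf transvections, so $\phi \in Tr$.

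The assignment $\phi \mapsto (g_u)_{u\in\Lv}$ is then an injective homomorphism from $\ker\pi$ into $A_{[v]}^{\Lv}$ carrying the leaf transvections to a basis, so $Tr = \ker\pi$ is free abelian of rank $|[v]|\cdot|\Lv|$. Combining the resulting short exact sequence with the splitting $s$ produces the advertised semidirect product. I expect the main obstacle to be the bookkeeping in the kernel computation: showing that the quotient action can be trivialized by an inner automorphism of $\AG$ while simultaneously guaranteeing that the resulting $g_u$ are uniquely determined, so the composition of leaf transvections built from them literally recovers $\hat\phi$ rather than only a representative of $\phi$. Both points rest on the two identities $A_{[v]} = Z(\AG)$ and $A_{[v]} \cap A_{\Lv} = \{1\}$.
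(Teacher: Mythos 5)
Your proof is correct and takes essentially the same route as the paper, which simply observes that the center $A_{[v]}$ is characteristic, obtains the split surjection onto $GL(A_{[v]})\times Out(A_{\Lv})$, and asserts that the kernel is generated by the leaf transvections. You have merely filled in the details the paper leaves as "easy to check" (well-definedness of the splitting, triviality of $Z(A_{\Lv})$ via $Z(\AG)=A_{[v]}$ and $A_{[v]}\cap A_{\Lv}=\{1\}$, and the explicit identification of the kernel with $A_{[v]}^{\Lv}$), all of which are sound.
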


\begin{proof}  Any outer automorphism of $\AG$ preserves the center $A_{[v]}$ and projects via $P_{[v]}$ to $Out(A_{\Lv})$.  This gives a homomorphism
\[
Out(\AG) \to GL(A_{[v]}) \times Out(A_{\Lv})
\]
which is clearly split surjective.  It is easy to check that the kernel of this homomorphism is the group generated by leaf transvections. 
\end{proof}


\section{Virtual cohomological dimension}

If $\Gamma$ is discrete or is a complete graph, then $\AG$ is free or free abelian and $Out(\AG)$ is known to have torsion-free subgroups of finite index, so that the {\it virtual cohomological dimension} (vcd) of $Out(\AG)$ is defined.   In this section we prove that the same is true for arbitrary $\Gamma$, and we show furthermore that the vcd of $Out(\AG)$ is always finite.  

We define the {\it dimension of $\AG$} to be the maximal rank of a free abelian subgroup of $\AG$. This is determined by the number of vertices in the largest complete subgraph of $\G$, and we also call this the \emph{dimension} of $\G$.  Thus $\G$ has dimension 1 if and only if $\AG$ is free.    If $\G$ has dimension $n>1$, then links of vertices in $\G$ have dimension at most $n-1$.  Since the image of $P$ lies in the product of $Out(A_{\Lv})$, it is natural to try to use $P$ together with inductive arguments to prove properties of $Out(\AG)$.  Even for connected $\G$, the subgraphs $\Lv$ are not in general connected, so we must consider the disconnected case.  In dimension 2 the links always generate free groups and the theory of $Out(F_n)$ comes into play;  to deal with the general case  we must also appeal to the result of Guirardel and Levitt \cite{GuiLev07} stated below.  

 If $\G$ has $j$ components consisting of a single point and $k$  components, $\G_1,  \dots ,\G_k$ consisting of more than one point, then $\AG$ splits as a free product 
\[
\AG = F_j \ast A_1 \ast \dots \ast A_k
\]
where $F_j$ is a free group and  $A_i$ is the right-angled Artin group associated to $\G_i$.

\begin{theorem}[Guirardel-Levitt]\label{GL}   Suppose $G$ is a group which decomposes as a free product $G = G_1 \ast \dots \ast G_n$ with at least one factor non-free.  Assume that $G_i$ and $G_i/Z(G_i)$ are torsion-free (respectively, have finite virtual cohomological dimension) for all $i$.  If the outer automorphism groups $Out(G_i)$  are  virtually torsion-free (respectively, have finite virtual cohomological dimension) for each factor $G_i$, then the same is true for $Out(G)$.
\end{theorem}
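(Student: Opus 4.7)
The natural approach is via Bass--Serre theory and a ``symmetric outer space'' built from $G$-trees adapted to the free product decomposition. Concretely, let $\mathcal{O}(G)$ be the space of (projective classes of) minimal simplicial $G$-trees $T$ with trivial edge stabilizers in which each $G_i$ fixes a vertex, marked so that the orbit space is a graph of groups realizing the given Grushko decomposition. Using Guirardel's machinery for deformation spaces of $G$-trees, $\mathcal{O}(G)$ is contractible and finite-dimensional; its dimension is bounded in terms of $n$ and the ranks of the free factors.

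Next I would analyze how $Out(G)$ acts on $\mathcal{O}(G)$. By Grushko's theorem, the conjugacy classes of the $G_i$ are permuted by any automorphism, so passing to the finite-index subgroup $Out^{*}(G)$ that fixes each conjugacy class $[G_i]$ is harmless for both conclusions. For any tree $T\in\mathcal{O}(G)$, the stabilizer of $T$ in $Out^{*}(G)$ sits in a short exact sequence
\[
1 \longrightarrow \mathcal{T}(T) \longrightarrow \mathrm{Stab}(T) \longrightarrow \prod_{i=1}^n Out(G_i) \longrightarrow 1,
\]
where $\mathcal{T}(T)$ is the group of ``twists'' supported on the edges of $T$. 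The key structural fact, which is where the $G_i/Z(G_i)$ hypothesis enters, is that the twist group at a vertex with stabilizer $G_i$ is built out of copies of $G_i/Z(G_i)$ (one per incident edge, modulo a diagonal), because twisting by a central element produces an inner automorphism of $G$.

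The torsion-free statement would then follow in two steps: choose torsion-free finite-index subgroups $H_i\leq Out(G_i)$, pull back to a finite-index subgroup $H\leq Out^{*}(G)$ whose image in the quotient lies in $\prod H_i$; the kernel of $H\to\prod H_i$ is an extension of a subgroup of $\mathcal{T}(T)$ by inner automorphisms, hence is torsion-free by hypothesis on $G_i/Z(G_i)$ and torsion-freeness of $G_i$, and a standard tree-stabilizer argument (any finite subgroup of $Out(G)$ fixes some tree) shows $H$ itself is torsion-free. For vcd, I would apply the equivariant cohomology spectral sequence for the action of a torsion-free finite-index subgroup on the contractible complex $\mathcal{O}(G)$: this yields
\[
\mathrm{vcd}(Out(G)) \;\le\; \dim \mathcal{O}(G) \;+\; \max_{T}\mathrm{vcd}(\mathrm{Stab}(T)),
\]
and the stabilizer vcd is finite by the exact sequence above together with the finite vcd assumptions on $Out(G_i)$ and $G_i/Z(G_i)$.

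The main obstacle I expect is the twist subgroup analysis: one has to keep track of which twists become inner in $Out(G)$ and verify that the quotient is genuinely controlled by $G_i/Z(G_i)$ rather than $G_i$ itself. A secondary issue is ensuring that the finite-index subgroup chosen to kill torsion in the stabilizers actually assembles into a finite-index torsion-free subgroup of $Out(G)$ globally; this requires a Serre-type argument using contractibility of $\mathcal{O}(G)$ so that any finite subgroup has a global fixed tree.
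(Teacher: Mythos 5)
The paper does not actually prove this statement: it is imported verbatim as Theorem~\ref{GL} from Guirardel--Levitt \cite{GuiLev07} and used as a black box to pass from connected to disconnected defining graphs, so there is no internal proof to compare against. Your outline does track the strategy of the cited source: build the outer space of the free product (minimal simplicial $G$-trees with trivial edge stabilizers in which each Grushko factor is elliptic), establish contractibility and finite dimensionality, and control stabilizers by an exact sequence whose kernel is a twist group assembled from copies of $G_i/Z(G_i)$ (one per incident edge at a vertex with group $G_i$, modulo the diagonal center) and whose quotient maps to $\prod Out(G_i)$ together with a part accounting for the free factors. That is the right picture, and it correctly locates where the hypothesis on $G_i/Z(G_i)$ enters.

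Two caveats keep this from being a proof rather than a reading of \cite{GuiLev07}. First, contractibility and finite dimensionality of this deformation space is itself the main theorem of the paper being invoked; used as ``Guirardel's machinery,'' your argument is not self-contained. Second, the reduction for virtual torsion-freeness is not quite right as stated: the kernel of $Out^{*}(G)\to\prod Out(G_i)$ is the full group of partial conjugations, not the twist group of a single tree, so you cannot deduce its torsion-freeness from the stabilizer exact sequence of one $T$. You need either the realization statement that every finite subgroup of $Out(G)$ fixes a point of the outer space (which you flag as an obstacle but do not supply --- and which does not follow formally from contractibility alone), or a direct algebraic analysis of the partial-conjugation kernel. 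Granting those two inputs, the equivariant cohomology spectral sequence bound $\mathrm{vcd}(Out(G))\le \dim\mathcal{O}(G)+\max_T \mathrm{vcd}(\mathrm{Stab}(T))$ is the standard and correct way to finish the finite-vcd half.
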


We are now in a position to prove our theorem.

\begin{theorem}  For any finite simplicial graph $\G$, the group $Out(\AG)$ is virtually torsion-free and has finite virtual cohomological dimension. 
\end{theorem}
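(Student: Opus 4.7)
The plan is to induct strongly on $n = |V(\G)|$. The base $n = 0$ is trivial; if $\G$ is discrete then $\AG$ is free and $Out(\AG) = Out(F_n)$ is virtually torsion-free with finite vcd by Culler--Vogtmann, so from now on assume $\G$ has at least one edge. The inductive step splits into three cases according to the structure of $\G$.

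If $\G$ is disconnected, write $\AG = F_j \ast A_1 \ast \cdots \ast A_k$ as in the paragraph preceding Theorem~\ref{GL}, with the $A_i$ being the RAAGs of the components $\G_i$ having more than one vertex. Since $\G$ has an edge, that edge lies in some $\G_i$, so $k \geq 1$ and the corresponding $A_i$ contains $\Z^2$, hence is non-free. Every $A_i$ is torsion-free with finite cohomological dimension via its action on the Salvetti cube complex; and $Z(A_i) = A_{S_i}$ where $S_i \subset V(\G_i)$ is the set of vertices joined to every other vertex, so $A_i/Z(A_i)$ is again a RAAG with the same finiteness properties. By the inductive hypothesis each $Out(A_i)$ is vtf with finite vcd, and $Out(F_j)$ has these properties by Culler--Vogtmann, so Theorem~\ref{GL} yields the conclusion for $Out(\AG)$.

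Suppose now $\G$ is connected. If $\G_0$ consists of a single vertex $[v]$, Proposition~\ref{Out(G_0)} gives
\[
Out(\AG) = Tr \rtimes \bigl( GL(A_{[v]}) \times Out(A_{\Lv}) \bigr),
\]
in which $Tr$ is finitely generated free abelian, $GL(A_{[v]})$ is a $GL_m(\Z)$ (classically vtf with finite vcd), and $|V(\Lv)| < n$ so the inductive hypothesis applies to $Out(A_{\Lv})$; vtf and finite vcd are inherited by the resulting direct and semidirect products. If instead $\G_0$ has at least two vertices, use the projection $P : Out^0(\AG) \to \prod_{[v]} Out(A_{\Lv})$: every $\Lv$ contains fewer than $n$ vertices, so each $Out(A_{\Lv})$ is vtf with finite vcd by induction, and Theorem~\ref{K_P} makes the kernel of $P$ finitely generated free abelian. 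The resulting extension realizes $Out^0(\AG)$ as vtf with finite vcd, and since $Out^0(\AG)$ has finite index in $Out(\AG)$ the conclusion follows.

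The hard part is the disconnected case: the projection machinery of Section~\ref{projections} is tailored to connected $\G$, so one has to leave that framework and appeal to Guirardel--Levitt (Theorem~\ref{GL}), the key sub-step being verification that both the free-product factors $A_i$ and their central quotients are torsion-free with finite vcd. Once that hypothesis-check is in hand, and given Proposition~\ref{Out(G_0)} and Theorem~\ref{K_P} for the connected cases, each branch of the induction reduces to a routine extension argument.
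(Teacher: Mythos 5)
Your proof is correct and follows essentially the same route as the paper: the same three-way case split (disconnected via Guirardel--Levitt, $\G_0$ a single vertex via Proposition~\ref{Out(G_0)}, $\G_0$ with several vertices via $P$ and Theorem~\ref{K_P}), with the same extension arguments. The only difference is that you induct on the number of vertices rather than on the dimension of $\G$ (the size of the largest clique, which drops when passing to links); both parameters decrease in every branch, so either choice works, and your explicit verification of the Guirardel--Levitt hypotheses on the factors $A_i$ and $A_i/Z(A_i)$ is a welcome detail the paper leaves implicit.
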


\begin{proof}  We proceed by induction on the dimension of $\G$.  If $\G$ has dimension $1$, then $\AG$ is free and the theorem follows from \cite{CulVog86}. 

Now suppose that $\G$ is connected and has dimension $\geq 2$.  First assume $\G_0$ has more than one vertex and consider the homomorphism $P$ defined in Section~\ref{projections}.   By induction, for every maximal $[v]$, $Out(A_{\Lv})$  is virtually torsion-free and has finite vcd, so the same holds for the image of $P$.  By Theorem~\ref{K_P}, the kernel $K_P$ is a finitely generated free abelian group, so in particular, it is torsion-free and has finite cohomological dimension. 
It now follows immediately that $Out^0(\AG)$ is virtually torsion-free and by the Serre spectral sequence, it has finite vcd.  Since $Out^0(\AG)$ is finite index in $Out(\AG)$, the same holds for the larger group.  

If $\G_0$ consists of a unique vertex $[v]$, we use Proposition~\ref{Out(G_0)}.  By induction, $Out(A_{\Lv})$  is virtually torsion-free and has finite vcd, and
the same is classically true for $Out(A_{[v]})=GL(A_{[v]})$.  Since the transvection group $Tr$ is free abelian, the theorem follows as above.

Finally, applying Theorem~\ref{GL}, these results extend to $n$-dimensional graphs $\G$ with more than one component.  This completes the induction.
\end{proof}

In \cite{ChCrVo}, the authors and J.~Crisp studied the case in which $\G$ is connected and 2-dimensional.  They obtained explicit upper and lower bounds on the vcd of $Out(\AG)$ and constructed a contractible ``outer space" with a proper $Out(\AG)$ action.  In a forthcoming paper with K.-U. Bux, the authors determine the exact vcd for many 2-dimensional right-angled Artin groups, in particular those whose defining graph is a tree  \cite{BuChVo}.

It was also shown in \cite{ChCrVo} that for connected, 2-dimensional $\G$, $Out(\AG)$ satisfies the Tits alternative.  One would like to do an inductive argument as above to show that this holds for all $\G$.  However, in this case, we do not have the analogue of Theorem \cite{GuiLev07} to pass from the connected to the disconnected case. 

\medskip
{\it Acknowledgements}:  The authors would like to thank Eddy Godelle for useful comments.


\def\cprime{$\prime$}

\affiliationone{ 
   Ruth Charney\\
        Mathematics Department\\
        Brandeis University\\
        Waltham, MA 02454-9110\\
  U.S.A.
   \email{charney@brandeis.edu}}
\affiliationtwo{ 
   Karen Vogtmann\\
   Mathematics Department\\
   Cornell University\\
   Ithaca, NY 14853-4201\\
   U.S.A.
   \email{vogtmann@math.cornell.edu}}


\begin{thebibliography}{God03}

\bibitem[BCV]{BuChVo}
Kai-Uwe Bux, Ruth Charney and Karen Vogtmann,  
\emph{Automorphisms of tree-based RAAGS and partially symmetric automorphisms of free groups},
to appear in Groups, Geometry and Dynamics.

\bibitem[CCV] {ChCrVo}
Ruth Charney, John Crisp and Karen Vogtmann, \emph{Automorphisms of 2-dimensional right-angled Artin groups},  Geom. and Topology \textbf{11} (2007), 2227--2264.

\bibitem[CV86]{CulVog86}
Marc Culler and Karen Vogtmann, \emph{Moduli of graphs and automorphisms of
  free groups}, Invent. Math. \textbf{84} (1986), no.~1, 91--119.  
  
\bibitem[DKR]{DuKaRe}
Andrew Duncan, Ilya Kazachkov and Vladimir Remeslennikov, \emph{Orthogonal systems of finite graphs}, arXiv:0707.0087

\bibitem[God03]{God03}
Eddy Godelle, \emph{Parabolic subgroups of {A}rtin groups of type {FC}},
  Pacific J. Math. \textbf{208} (2003), no.~2, 243--254.  

\bibitem[GL07]{GuiLev07}
Vincent Guirardel and Gilbert Levitt, \emph{{The Outer space of a free product}}, 
Proc. Lond. Math. Soc. (3) \textbf{94} (2007), 695--714.

\bibitem[Lau95]{Lau95}
Michael~R. Laurence, \emph{A generating set for the automorphism group of a
  graph group}, J. London Math. Soc. (2) \textbf{52} (1995), no.~2, 318--334.
 
\bibitem[Ser89]{Ser89}
Herman Servatius, \emph{Automorphisms of graph groups}, J. Algebra \textbf{126}
  (1989), no.~1, 34--60. 

\end{thebibliography}
\end{document}